\documentclass[12pt,a4paper]{amsart}
\usepackage{amsfonts,amsmath,amsthm,amssymb}
\usepackage{pstricks,pst-node}
\usepackage{fullpage}

\newtheorem*{thm}{Theorem}

\title{A short proof of a famous combinatorial identity}

\author{Rui Duarte}
\address{Center for Research and Development in Mathematics and Applications,
Department of Mathematics,
University of Aveiro}
\email{rduarte@ua.pt}

\author{Ant\'onio Guedes de Oliveira}
\address{CMUP and Mathematics
  Department, Faculty of Sciences, University of Porto}
\email{agoliv@fc.up.pt}

\thanks{The work of both authors was supported in part by the European
  Regional Development Fund through the program COMPETE - Operational
  Programme Factors of Competitiveness (``Programa Operacional
  Factores de Competitividade'') and by the Portuguese Government
  through FCT - Funda\c{c}\~ao para a Ci\^encia e a Tecnologia, under
  the projects PEst-C/MAT/UI0144/2011 and PEst-C/MAT/UI4106/2011.}

\date{\today}

\begin{document}

\begin{abstract}
  We explain how the identity
$$\sum_{i+j=n}\binom{2i}{i}\binom{2j}{j}\;=\;\displaystyle4^n$$
is an easy consequence of the inclusion-exclusion principle.
\end{abstract}

\maketitle

\noindent
There is a famous combinatorial identity on the convolution of the
central binomial coefficients:
\begin{equation}\label{mainId}
\sum_{i+j=n}\binom{2i}{i}\binom{2j}{j}\;=\;\displaystyle4^n\,.
\end{equation}
This identity can be immediately proven by squaring both sides of the
equality
$$(1-4x)^{-1/2}=\sum_{n\geq0}\binom{2n}{n}\,x^n\,,$$
which is an easy application of Newton's generalized binomial theorem
\cite[Exercises 1.2.c and 1.4.a]{RS}.
Combinatorial proofs of the identity also exist, being the first one
credited by Paul~Erd\H{o}s to Gy\"orgy~Haj\'os, in the thirties of the
twentieth century \cite{MS}, as well as a number of interesting
non-bijective proofs that appeared in the literature from various
sources and viewpoints, even quite recently \cite{VdA,ChX}.

But we do not know, as to now, of a proof both self-contained and
really short of this identity. Yet, in here, we present a short and
elementary proof ---based on the inclusion-exclusion principle--- of
\emph{a generalization} of \eqref{mainId} (see \cite{DGO} for related
identities). So, we suggest that, perhaps, we could not see the forest
for the trees.

\begin{thm}
  For every nonnegative integer numbers $i$, $j$ and $n$ and %for
  every real number $\ell$,
$$\sum_{i+j=n}\binom{2i-\ell}{i}\binom{2j+\ell}{j}\;=\;4^n\,.$$
\end{thm}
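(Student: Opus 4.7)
The plan is to reduce the statement from real $\ell$ to nonnegative integer $\ell$ by a polynomial-interpolation argument, and then to verify the identity at those integer values by an inclusion-exclusion count on a suitable combinatorial model.

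Observe first that both sides of the proposed equality are polynomials in $\ell$. The right-hand side is the constant $4^n$; on the left, by the standard definition $\binom{x}{k}=x(x-1)\cdots(x-k+1)/k!$, each summand $\binom{2i-\ell}{i}\binom{2j+\ell}{j}$ is a polynomial in $\ell$ of degree $i+j=n$. Hence the left-hand side is a polynomial in $\ell$ of degree at most $n$, and it suffices to verify the equality on any infinite set of values of $\ell$; I will take the nonnegative integers.

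For a fixed nonnegative integer $\ell$, I would exhibit a set $\Omega$ of cardinality $4^n$---for instance the set $\{+1,-1\}^{2n}$ of $\pm1$-walks of length $2n$ starting at the origin---partition $\Omega$ into $n+1$ blocks indexed by a natural statistic $i\in\{0,1,\ldots,n\}$, and show that the block of index $i$ has cardinality $\binom{2i-\ell}{i}\binom{2j+\ell}{j}$ with $j=n-i$. The factor $\binom{2j+\ell}{j}$ should count an ``unrestricted'' portion of each walk, while $\binom{2i-\ell}{i}$---which becomes signed precisely when $\ell>2i$, where it equals $(-1)^i\binom{\ell-i-1}{i}$---should record, via inclusion-exclusion, the number of walks of the complementary portion that avoid a family of $\ell$ forbidden events.

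The delicate point will be to locate the right statistic on $\Omega$ and the right family of forbidden patterns so that the inclusion-exclusion count produces precisely the claimed product, with the correct signs when $\ell>2i$. Natural candidates for the statistic are the last return of the walk to height $0$, or the position of the $(\ell+1)$-st step of a prescribed type; once the combinatorial model is fixed, the remaining verification should reduce to a routine application of the inclusion-exclusion principle, after which polynomiality in $\ell$ delivers the theorem for all real $\ell$.
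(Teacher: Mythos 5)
Your opening reduction is sound: both sides are polynomials in $\ell$ of degree at most $n$, so it suffices to verify the identity for $\ell$ ranging over the nonnegative integers (the paper uses the same polynomiality device, though only for its auxiliary lemma). The problem is that everything after that reduction is a plan rather than a proof. The entire content of the theorem, for integer $\ell$, is the claim that $\{\pm1\}^{2n}$ admits a statistic $i$ whose fibers have signed count $\binom{2i-\ell}{i}\binom{2j+\ell}{j}$, and you never construct that statistic or the family of ``forbidden events'' whose inclusion-exclusion is supposed to produce the factor $(-1)^i\binom{\ell-i-1}{i}$. You explicitly flag this as ``the delicate point'' and offer only candidate statistics (last return to the origin, position of the $(\ell+1)$-st step of a prescribed type) without checking either. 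Note also that for $\ell>2i$ the term $\binom{2i-\ell}{i}$ has sign $(-1)^i$, so no literal partition of $\{\pm1\}^{2n}$ can work; a signed count or sign-reversing involution is forced, and that is exactly the construction left unsupplied. Even in the base case $\ell=0$, where the last-return decomposition does work, the verification that walks of length $2j$ with no return to the origin number $\binom{2j}{j}$ is itself a nontrivial lemma you would need to prove to keep the argument self-contained.

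For comparison, the paper takes a much less ambitious route: inclusion-exclusion is used only to prove the scalar identity $\sum_{i=0}^p(-1)^i\binom{\ell-i}{p}\binom{p}{i}=1$ (counting $(\ell-p)$-subsets of $\{1,\dotsc,\ell\}$ avoiding $\{1,\dotsc,p\}$), and the main sum is then collapsed algebraically via the negation rule $\binom{2i-\ell}{i}=(-1)^i\binom{\ell-1-i}{i}$, Vandermonde's identity, and an exchange of summation, ending at $\sum_{k=0}^{n}\binom{2n+1}{k}=4^n$. If you want to salvage your approach, you must either exhibit the combinatorial model explicitly and carry out the inclusion-exclusion on it, or fall back on an algebraic reduction of the kind the paper performs; as written, the proposal does not prove the theorem.
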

\begin{proof}

\newcommand\A{\mathcal{A}}

We first prove that, for any value of $\ell$ and any nonnegative
integer $p$,
\begin{equation}\label{aux}
\sum_{i=0}^p(-1)^i \binom{\ell-i}{p}\binom{p}{i}\;=\;1.
\end{equation}
Note that it is sufficient to prove the result for the integers
$\ell>2p$, since the sum on the left-hand member defines a polynomial
(thus constant) in $\ell$. Now, consider the collection $\A$ of the
subsets of $\{1,\dotsc,\ell\}$ with $\ell-p$ elements and let $\A_j$
be the set of elements of $\A$ that contain $j$, for
$j=1,\dotsc,p$. Then, what \eqref{aux} says is that
$\{p+1,\dotsc,n\}\in\A$ is the unique element of $\A$ that does not
contain any integer between $1$ and $n$.
In more detail,
% on the one hand
%   $$|\A_1\cup\dotsb\cup\A_p| 
%   =\binom{\ell}{\ell-p}-1$$ and, on the other hand,
by the inclusion-exclusion principle,
\begin{align*}
\binom{\ell}{\ell-p}-1=  |\A_1\cup\dotsb\cup\A_p|&=
\sum_{i=1}^p (-1)^{i+1}\bigg(\sum_{1\leq j_1<\dotsb<j_i\leq p}|
\A_{j_1}\cap\dotsb\cap \A_{j_i}|\bigg)\\
&=\sum_{i=1}^p (-1)^{i+1}\binom{p}{i}\,\binom{\ell-i}{p}\end{align*}
%\intertext{
---and we have \eqref{aux}--- since, for any integers
  $j_1,\dotsc,j_i$ such that $1\leq j_1<\dotsb<j_i\leq p$, %}
$$|A_{j_1}\cap\dotsb\cap A_{j_i}|=\binom{\ell-i}{\ell-i-p}=\binom{\ell-i}{p}
\,.$$
\medskip

Now, by definition, Vandermonde's identity and since
$\binom{\ell-1-i}{i}\binom{\ell-1-2i}{p-i}=\binom{\ell-1-i}{p}\binom{p}{i}$,
\begin{align*}
\sum_{i+j=n} \binom{2i-\ell}{i} \binom{2j+\ell}{j}
& = \sum_{i+j=n}(-1)^i \binom{\ell-1-i}{i} \binom{2n+\ell-2i}{j} \\
& = \sum_{i+j=n} (-1)^i \binom{\ell-1-i}{i} \left[\sum_{k=0}^j
\binom{2n+1}{k} \binom{\ell-1-2i}{j-k} \right] \\
& = \sum_{k=0}^n \left[ \binom{2n+1}{k} \sum_{i=0}^{n-k} (-1)^i
\binom{\ell-1-i}{i} \binom{\ell-1-2i}{n-k-i} \right]\\
& = \sum_{k=0}^n \binom{2n+1}{k}\\
& = \frac{1}{2}\sum_{k=0}^{2n+1}\binom{2n+1}{i}\\
& = 4^n\,.\\[-20pt]
\end{align*}
\end{proof}

%\noindent\textbf{Acknowledgments.}\enspace
%Our generalization is a special case of \cite[Equation 12]{Gd} (see also
%\cite{DGO}).
%We thank Christian Krattenthaler for drawing our attention to this article.

\end{document}